\documentclass[12pt]{amsart}
\usepackage{latexsym}
\usepackage{amsmath}
\usepackage{amsfonts}
\usepackage{amsthm}
\usepackage{amssymb}
\usepackage{ifthen}
\usepackage{enumerate}
\usepackage[T1]{fontenc}
\usepackage{dutchcal}
\usepackage{cite}

\usepackage[mathscr]{eucal}
\newcommand*{\rom}[1]{\expandafter\@slowromancap\romannumeral #1@}

\DeclareFontFamily{U}{mathx}{}
\DeclareFontShape{U}{mathx}{m}{n}{<-> mathx10}{}
\DeclareSymbolFont{mathx}{U}{mathx}{m}{n}
\DeclareMathAccent{\widehat}{0}{mathx}{"70}
\DeclareMathAccent{\widecheck}{0}{mathx}{"71}
\def\eq#1{{\rm(\ref{E#1})}}
\def\Eq#1#2{\ifthenelse{\equal{#1}{*}}
  {\begin{equation*}\begin{aligned}#2\end{aligned}\end{equation*}}
  {\begin{equation}\begin{aligned}\label{E#1}#2\end{aligned}\end{equation}}}

\newcounter{allenv}[section]

\newtheorem{thm}{Theorem}
\newtheorem*{thm*}{Theorem}
\newtheorem{prop}{Proposition}
\newtheorem*{conj*}{Conjecture}
\newtheorem{coro}{Corollary}

\theoremstyle{remark}
\newtheorem*{remark*}{Remark}

\newtheorem*{exmp*}{Example}

\theoremstyle{definition}

\author[Tibor Kiss]{Tibor Kiss}

\title[]{A Counterexample to Matkowski's Conjecture for Quasi Graph-Additive Functions} 

\address{Institute of Mathematics,
University of Debrecen,
4002 Debrecen, Pf.~400, Hungary}
\email{kiss.tibor@science.unideb.hu}

\keywords{functional equations, graph-additive functions, continuous solutions} 

\subjclass[2020]{39B22, 26E60}

\thanks{The author’s research was supported by the HUN‑REN Hungarian Research Network.}

\def\eq#1{{\rm(\ref{E#1})}}

\def\Eq#1#2{\ifthenelse{\equal{#1}{*}}
	{\begin{equation*}\begin{aligned}[]#2\end{aligned}\end{equation*}}
	{\begin{equation}\begin{aligned}\label{E#1}#2\end{aligned}\end{equation}}}

\begin{document}

\begin{abstract}
In this paper we investigate a conjecture of Janusz Matkowski concerning the continuous solutions of the functional equation
\Eq{*}{
f\big(f(-x)+x\big)=f\big(-f(x)\big)+f(x),\qquad x\in\mathbb{R}.
}

Matkowski conjectured that all continuous solutions must necessarily be linear on both the negative and the positive half-line. We show, however, that the family of continuous solutions to the equation in question is far richer than anticipated: there exist continuous solutions that admit an arbitrary part. 

In addition, we provide a sufficient condition which, in the continuous setting, enforces the conclusion predicted by Matkowski’s Conjecture.
\end{abstract}

\maketitle

\section{Introduction}

On 18 September 2025, on the fourth day of the \emph{21st International Conference on Functional Equations and Inequalities} in Będlewo, Dorota Głazowska delivered an excellent and engaging plenary lecture on weakly associative functions and means. In her talk, she mentioned a result, which, in the family of translative binary operations, characterizes weak associativity in terms of a functional equation. She also presented a conjecture concerning the continuous solutions of this functional equation. The result in question and the conjecture originally appeared in the paper \cite{Mat25} of Janusz Matkowski. In order to formulate them, first we introduce some basic concepts.

As it is well known, a binary operation $F:\mathbb{R}\times\mathbb{R}\to\mathbb{R}$ is called \emph{associative} if
\Eq{*}{
F\big(F(x,y),z\big)=F\big(x,F(y,z)\big)
}
holds for any choice of $x,y,z\in\mathbb{R}$. This property is frequently studied in the theory of means as well. It is easy to see that any quasi‑sum mean is associative. In contrast, strictly weighted quasi‑arithmetic means never satisfy the above equality. In her talk, Głazowska addressed the question of how one might weaken the associativity equation so that strictly weighted quasi‑arithmetic means would also satisfy it. This leads to the concept of \emph{weak associativity}, namely to the identity
\Eq{*}{
F\big(F(x,y),x\big)=F\big(x,F(y,x)\big),\qquad x,y\in\mathbb{R}.
}
It turned out that every strict weighted quasi‑arithmetic mean is weakly associative. The interested reader can find further information in this direction in \cite{GlaMat25}.

Now we turn to the direction initiated by Matkowski in his paper \cite{Mat25}. A binary operation $F:\mathbb{R}\times\mathbb{R}\to\mathbb{R}$ is called \emph{translative} if
\Eq{*}{
F(x+z,y+z)=F(x,y)+z
}
holds for all $x,y,z\in\mathbb{R}$. Then the characterization theorem can be stated as follows.

\begin{thm*}[J. Matkowski, \cite{Mat25}] Let $F:\mathbb{R}\times\mathbb{R}\to\mathbb{R}$ be translative and $f:\mathbb{R}\to\mathbb{R}$ be defined as $f(x):=F(x,0)$. The binary operation $F$ is weakly associative if and only if
\Eq{kadd}{
f\big(f(-x)+x\big)=f\big(-f(x)\big)+f(x)
}
is satisfied for all $x\in\mathbb{R}$.
\end{thm*}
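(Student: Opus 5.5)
Translativity lets us write $F$ entirely in terms of $f$: for all $x,y$, $F(x,y)=F(x-y,0)+y=f(x-y)+y$. The plan is to substitute this into both sides of weak associativity, simplify using translativity once more, and observe that the result depends only on the difference $y-x$. Conversely, if $f$ satisfies \eq{kadd} for every real argument, we reverse the computation.

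First we compute the two sides.
\begin{itemize}
\item[] For the left-hand side, $F(x,y)=f(x-y)+y$, so
\[
F\big(F(x,y),x\big)=f\big(F(x,y)-x\big)+x=f\big(f(x-y)+y-x\big)+x .
\]
\item[] For the right-hand side, $F(y,x)=f(y-x)+x$, so
\[
F\big(x,F(y,x)\big)=f\big(x-F(y,x)\big)+F(y,x)=f\big(-f(y-x)\big)+f(y-x)+x .
\]
\end{itemize}
Cancelling $x$ and setting $t:=y-x$, weak associativity becomes
\[
f\big(f(-t)+t\big)=f\big(-f(t)\big)+f(t).
\]
This must hold for all pairs $(x,y)\in\mathbb{R}^2$. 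Since $t=y-x$ ranges over all of $\mathbb{R}$ (take $x=0$, $y=t$), the identity holds for all $t\in\mathbb{R}$, which is exactly \eq{kadd}.

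For the converse, assume $f$ satisfies \eq{kadd} for every real argument. Given $x,y$, apply it with $t=y-x$. The displayed computations above are identities valid for any translative $F$, so reading them backwards gives $F(F(x,y),x)=F(x,F(y,x))$.

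The only delicate point is bookkeeping: the second argument of $F$ must be subtracted correctly in each application of $F(u,v)=f(u-v)+v$. In particular, on the right-hand side the outer $F$ has first argument $x$ and second argument $F(y,x)$, which produces the inner term $-f(y-x)$. Beyond this there is no real obstacle.
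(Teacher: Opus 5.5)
Your proof is correct: with $F(x,y)=f(x-y)+y$, the two sides of weak associativity become $f\big(f(-t)+t\big)+x$ and $f\big(-f(t)\big)+f(t)+x$ with $t=y-x$, and since $t$ ranges over all of $\mathbb{R}$, the equivalence with \eq{kadd} holds in both directions. The paper does not prove this theorem (it is quoted from Matkowski), but the reduction $F(x,y)=F(x-y,0)+y=f(x-y)+y$ you use is exactly the one the paper itself uses in Section 2, so your direct substitution is the natural argument.
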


Equations of this kind have been extensively investigated by numerous authors since the 1970s. Related results can be found, for instance, in \cite{Zdu72}, \cite{For84}, and \cite{Sab85}. A widely studied variant is
\Eq{add}{
f\big(f(x)+x\big)=f\big(f(x)\big)+f(x),\qquad x\in\mathbb{R},
}
which essentially expresses that $f$ is additive on its own graph. G. L. Forti and J. Matkowski obtained the same results concerning the solution of \eq{add} under various regularity assumptions in \cite{For83}, \cite{Mat85} and \cite{Mat93}. The most general result was obtained by Witold Jarczyk, who solved the equation assuming only continuity. The details of his far‑from‑trivial approach can be found in \cite{Jar88} and \cite{Jar91}.

\begin{thm*}[W. Jarczyk, \cite{Jar88}]
If $f:\mathbb{R}\to\mathbb{R}$ is a continuous solution of equation \eq{add} then either there exist non-negative numbers $c_-$ and $c_+$ such that
\Eq{*}{
f(x)=
\begin{cases}
c_-x&\text{if }x\leq 0,\\
c_+x&\text{if }x> 0,
\end{cases}
}
or there exists a negative number $c$ such that
\Eq{*}{
f(x)=cx,\qquad x\in\mathbb{R}.
}\end{thm*}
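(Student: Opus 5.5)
Our approach to Jarczyk's theorem is to study the \emph{ratio function} $h(x):=f(x)/x$ for $x\neq0$, and to show that $h$ is constant on each open half-line. Continuity will do the rest, together with a separate treatment of the case where $f$ reverses sign. We begin with two elementary facts. Putting $x=0$ in \eq{add} gives $f(f(0))=f(f(0))+f(0)$, so $f(0)=0$. Next we reinterpret \eq{add} geometrically: if $y=f(x)$ and $z=f(y)$, then $f(x+y)=y+z$. In other words, whenever $(x,y)$ and $(y,z)$ both lie on the graph of $f$, so does their vector sum $(x+y,y+z)$. This ``additivity along chains of the graph'' is the structural content we shall exploit.

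The first substantive step is to understand the zero set $Z=f^{-1}(0)$ and the sign pattern of $f$. Suppose $f(x)=0$. Then \eq{add} is trivially satisfied at $x$. If $f(y)=y$ type degeneracies occur, we shall use the chain rule above to propagate values. We plan to show that on each component interval $I$ of $\mathbb{R}\setminus Z$, the map $T(x)=x+f(x)$ sends the relevant part of the graph into itself, and that the equation takes the form $h(T(x))\,T(x)=f(f(x))+f(x)$.

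Assume first that $f(x)$ has the same sign as $x$. Then $f(f(x))$ is again evaluated on the same half-line, and iterating the chain construction produces points $x_n$ with $x_n\to\pm\infty$ or $x_n\to$ a boundary point of $Z$. Along these points the ratios $h(x_n)$ are tied to one another. The aim is to derive the identity $h(T(x))=h(x)$ whenever $h(f(x))=h(x)$, and then to bootstrap: the set of $x$ on which $h(f(x))=h(x)$ should be shown to be all of the half-line, using continuity and the density of orbits generated by the two maps $x\mapsto f(x)$ and $x\mapsto x+f(x)$. This yields the representation $f(x)=c_\pm x$ with $c_\pm\ge0$.

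In the remaining case $f$ maps some positive point to a negative value, or vice versa. Here $f(f(x))$ is evaluated on the opposite half-line, and the chain relation couples the constants on the two sides. Comparing them via \eq{add} should force $c_-=c_+=c<0$, giving global linearity $f(x)=cx$.

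The main obstacle is the bootstrapping step, namely proving that $h$ is genuinely constant rather than merely invariant along particular orbits. This is where Jarczyk's argument is known to be delicate. A priori, $h$ could oscillate on a Cantor-like set of intervals, each of which is invariant under $T$. We would first try a maximality argument. Let $M=\sup h$ on a compact subinterval of a half-line, attained at some point $x_0$ by continuity. Applying \eq{add} at $x_0$ and at $f(x_0)$ should show that $h$ also equals $M$ at $T(x_0)$ and at $f(x_0)$. The set where $h=M$ is then closed and invariant under both maps, and we would try to show it is open as well, so that it fills the whole half-line. If this fails, we would fall back on a careful analysis of fixed points of $h$ restricted to intervals in the complement of that set, in the spirit of \cite{Jar91}.
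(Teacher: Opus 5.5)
There is a genuine gap. What you have written is a plan, and its decisive step, namely that $h=f/\mathrm{id}$ is constant on each open half-line, is never proved. That step is essentially the whole of Jarczyk's theorem. The paper does not prove it either: it cites \cite{Jar88,Jar91} and calls the argument far from trivial. Your preliminary facts ($f(0)=0$ and the vector-sum reformulation of \eq{add}) are correct.

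The identity $h(T(x))=h(x)$ whenever $h(f(x))=h(x)$ is only a rewriting of \eq{add}. You never establish the hypothesis $h(f(x))=h(x)$, and invariance of $h$ along orbits of $f$ and $T$ would yield constancy only if those orbits were dense. Nothing forces density. These maps generate only a semigroup, and already for the genuine solution $f=\mathrm{id}$ the orbits are $\{2^kx:k\ge 0\}$, so density cannot be the source of rigidity here.

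Compare the paper's Theorem for the irrational case. There, density comes from Kronecker's theorem applied to the group $\{n\ln a+m\ln(1-a)\}$. That group is available only because Proposition~\ref{hom} first gives homogeneity with two fixed, invertible factors $a$ and $1-a$, which in turn uses that $f$ is already linear on the other half-line. In Jarczyk's setting the ``factors'' $h(x)$ and $1+h(x)$ vary with $x$, so this mechanism has no analogue.

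The fallback maximum argument fails as stated. For $x>0$ with $f(x)>0$, dividing \eq{add} by $T(x)=x+f(x)$ gives
\[
h\big(T(x)\big)=\frac{x}{T(x)}\,h(x)+\frac{f(x)}{T(x)}\,h\big(f(x)\big),
\]
a convex combination. So a maximizer $x_0$ of $h$ tells you nothing about $h(T(x_0))$ or $h(f(x_0))$. The principle propagates only backwards, from a maximizer of the form $T(y)$ to $y$ and $f(y)$. Even that requires the maximum to be taken over a set containing $y$ and $f(y)$. A compact subinterval does not guarantee this, and on a whole half-line the supremum need not be attained (it may be approached at $0^+$ or at $\infty$). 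You also give no reason why $\{h=M\}$ should be open.

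The sign analysis is incomplete as well:
\begin{enumerate}
\item You treat ``$f(x)$ has the sign of $x$'' and ``$f$ reverses sign somewhere'' as global alternatives, without showing that a sign reversal at one point propagates to all points.
\item The claim that $T$ preserves the components of $\mathbb{R}\setminus f^{-1}(0)$ is unproved. It also fails for genuine solutions: for $f(x)=cx$ with $c<-1$, $T(x)=(1+c)x$ interchanges the two half-lines.
\item In the sign-reversing case you compare constants $c_\pm$ whose existence was only argued, and not proved, under the opposite hypothesis.
\end{enumerate}
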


Motivated by this result, Matkowski, in his paper \cite{Mat25}, formulated the following conjecture.

\begin{conj*}[J. Matkowski, Remark 8. and Conjecture 2. of \cite{Mat25}]\label{cm}
Let $f:\mathbb{R}\to\mathbb{R}$ be continuous. The function $f$ solves equation \eq{kadd} if and only if there exist $a,b\in\mathbb{R}$ with $a\neq b$ such that
\Eq{*}{
f(x)=
\begin{cases}
ax&\text{if }x\leq 0,\\
bx&\text{if }x>0,
\end{cases}
}
where $a,b\in[0,1]$ or $a+b=1$.
\end{conj*}

Although Matkowski’s conjecture was well‑founded, it turned out that the family of solutions of \eq{kadd} is much richer. Equation \eq{kadd}, even in the continuous case, admits solutions that possess a nonlinear component.

\section{Solutions of \eq{kadd} with non-linear components}

In all that follows we shall assume that $f$ is known on the non-positive half‑line, and indeed that it is linear there. More precisely, we assume that there exists $a\in\mathbb{R}$ such that
\Eq{neg}{
f(x)=ax,\qquad x\leq 0.
}

This is not a serious restriction in the following sense. One can easily deduce that if $f$ is a solution of \eq{kadd}, then $x\mapsto-f(-x)$ is a solution as well. As a consequence, each of our results has a corresponding counterpart, its dual, namely the case when the function is linear on the non‑negative half‑line. As the required modifications are self‑evident, we do not describe these consequences separately.

We also take into account the origin of our functional equation, namely that it arose as a restriction of a two‑variable function. In order to obtain further conditions on $f$, we assume that $F:\mathbb{R}\times\mathbb{R}\rightarrow \mathbb{R}$ is a \emph{mean}, that is
\Eq{mean}{
\min(x,y)\leq F(x,y)\leq\max(x,y)
}
holds for all $x,y\in\mathbb{R}$. Because of the translativity of $F$, for any $x,y\in\mathbb{R}$, we can write
\Eq{*}{
F(x,y)=F(x-y,0)+y=f(x-y)+y,}
thus, in the light of \eq{mean}, it follows that
\Eq{BT}{
\min(x,0)\leq f(x)\leq \max(x,0),\qquad x\in\mathbb{R}.
}
We will refer to condition \eq{BT}, from now on, as the \emph{bow‑tie condition}. We also note that this forces, in \eq{neg}, that $0\leq a\leq 1$. Furthermore, by the squeeze theorem, for any function satisfying \eq{BT}, we must have $f(0-)=f(0+)=0$.  

We are now in a position to formulate our first statement.

\begin{prop}\label{hom}
Assume that $f:\mathbb{R}\to\mathbb{R}$ satisfies condition \eq{neg} and that $0\leq f(x)\leq x$ for all $x\geq 0$. Then $f$ solves the functional equation \eq{kadd} if and only if it is $\lambda$-homogeneous on the non-negative half-line with $\lambda\in\{a,1-a\}$.
\end{prop}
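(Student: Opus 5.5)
The plan is to substitute the explicit form \eq{neg} into \eq{kadd} and treat the cases $x\geq 0$ and $x<0$ separately. In each case the sign hypotheses fix which branch of $f$ (linear or unknown) each inner argument falls into. Each case then collapses to one scaling identity on $[0,\infty)$. I read ``$\lambda$-homogeneous on the non-negative half-line with $\lambda\in\{a,1-a\}$'' as
\Eq{*}{
f(\lambda t)=\lambda f(t)\qquad\text{for all } t\geq 0 \text{ and for both } \lambda=a \text{ and } \lambda=1-a .
}
The proof shows that \eq{kadd} is equivalent to these two identities together. Recall that $0\leq a\leq 1$ is forced, so both $a$ and $1-a$ are non-negative.

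\emph{Case $x\geq 0$.} Here $-x\leq 0$, so $f(-x)=-ax$. The left-hand side of \eq{kadd} becomes $f\big((1-a)x\big)$, and its argument $(1-a)x$ is non-negative. Since $f(x)\geq 0$, we have $-f(x)\leq 0$, hence $f(-f(x))=-af(x)$. The right-hand side is then $(1-a)f(x)$. So \eq{kadd} restricted to $x\geq0$ is exactly $f\big((1-a)t\big)=(1-a)f(t)$ for $t\geq 0$.

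\emph{Case $x<0$.} Put $t=-x>0$. Then $f(x)=-at$, and $f(-x)=f(t)\in[0,t]$. This gives $f(-x)+x=f(t)-t\leq 0$, which is exactly where the hypothesis $f(t)\leq t$ is used. Hence the left-hand side is $a\big(f(t)-t\big)$. On the right, $-f(x)=at\geq 0$, so the right-hand side is $f(at)-at$. Cancelling $-at$ on both sides gives $f(at)=af(t)$ for $t>0$. The case $t=0$ is trivial because $f(0)=0$ by \eq{neg}.

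For the converse, each step above is an equivalence once the sign information is in place. So if $f$ satisfies both scaling identities on $[0,\infty)$, reading the computations backwards verifies \eq{kadd} for every real $x$. I expect no real obstacle here. The only delicate point is the sign bookkeeping: checking that each inner argument lands in the half-line where its value is known. This needs the bound $0\leq f\leq \mathrm{id}$ on $[0,\infty)$ in the second case. The degenerate values $a\in\{0,1\}$ need no separate treatment, since the identities then become trivial statements at $0$ or at $1$.
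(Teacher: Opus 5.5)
Your proof is correct and follows the paper's argument: you substitute \eq{neg}, split by the sign of $x$, and obtain $(1-a)$-homogeneity from $x\geq 0$ (using $f(x)\geq 0$) and $a$-homogeneity from $x<0$ (using $f(-x)+x\leq 0$), then read the same computations backwards for sufficiency. One small remark: the bound $0\leq a\leq 1$ comes from the paper's standing bow-tie condition, not from the proposition's own hypotheses, but your argument never needs it, since $f\big((1-a)t\big)$ and $f(at)$ enter only as values of $f$ and the identities come out the same for every real $a$.
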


\begin{proof}
Assume first that $f$ solves equation \eq{kadd}. If $u\leq 0$, then $f(-u)+u\leq 0$, hence the left and right hand side of \eq{kadd} reduce to $af(-u)+au$ and $f(-au)+au$, respectively. By equating them, simplifying and putting $x=-u$, we obtain that \Eq{*}{f(ax)=af(x),\qquad x\geq 0.}

If $x\geq 0$, then $-x\leq 0$ and $f(x)\geq 0$. Thus, after a computation similar to the preceding one, we obtain that
\Eq{*}{f\big((1-a)x\big)=(1-a)f(x),\qquad x\geq 0.}

Assume now that $f$ is $\lambda$-homogeneous on the non-negative half-line. We show that $f$ solves equation \eq{kadd}. If $x\geq 0$, then we have that
\Eq{*}{
f\big(f(-x)+x\big)=f(-ax+x)=(1-a)f(x)=-af(x)+f(x)=f\big(-f(x)\big)+f(x).
}

If $x<0$, then
\Eq{*}{
f\big(-f(x)\big)+f(x)=f(-ax)+ax=af(-x)+ax=f\big(f(-x)+x\big),
}which finishes the proof.
\end{proof}

The above proposition shows that the cases $a=0$ and $a=1$ are exceptional. This yields the following direct consequence.

\begin{coro}
For any $g:\mathbb{R}\to\mathbb{R}$ that satisfies \eq{BT}, the function
\Eq{*}{
f:\mathbb{R}\to\mathbb{R},\qquad 
f(x):=
\begin{cases}
ax&\text{if }x\leq 0,\\
g(x)&\text{if }x>0
\end{cases}
}
solves functional equation \eq{kadd}, provided that $a=0$ or $a=1$. Moreover, if $g$ is continuous on the non-negative half-line, then $f$ is a continuous solution of \eq{kadd}.
\end{coro}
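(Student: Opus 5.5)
The plan is to obtain the corollary directly from Proposition~\ref{hom}. The point is that when $a\in\{0,1\}$, the set $\{a,1-a\}$ equals $\{0,1\}$, so $1$-homogeneity is one of the admissible choices. Every function $h$ on $[0,\infty)$ is trivially $1$-homogeneous, since $h(1\cdot x)=1\cdot h(x)$ always holds. So the homogeneity requirement in Proposition~\ref{hom} costs nothing in these two cases.

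First, I will check the hypotheses of Proposition~\ref{hom}. By construction $f(x)=ax$ for $x\le 0$, which is \eq{neg}. For $x>0$, \eq{BT} gives $\min(x,0)=0\le g(x)\le x=\max(x,0)$, so $0\le f(x)\le x$ there. At $x=0$ we have $f(0)=a\cdot 0=0$, which also satisfies the bound. Hence $0\le f(x)\le x$ on the whole non-negative half-line.

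Next, I will apply the "if" direction of Proposition~\ref{hom} with $\lambda=1$. If $a=0$, then $1=1-a$; if $a=1$, then $1=a$; in both cases $\lambda=1\in\{a,1-a\}$. Since $f(1\cdot x)=1\cdot f(x)$ holds for every $x\ge 0$, $f$ is $1$-homogeneous on $[0,\infty)$, and the proposition yields that $f$ solves \eq{kadd}. One could also verify this directly. For $a=0$, both sides of \eq{kadd} equal $f(x)$ when $x\ge0$, and both equal $0$ when $x<0$. For $a=1$, both sides equal $f(x)$ when $x<0$, and both equal $0$ when $x\ge 0$. In the $a=1$, $x\ge 0$ case the left side is $f(-x+x)=f(0)=0$, and the right side is $f(-f(x))+f(x)=-f(x)+f(x)=0$.

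Finally, for continuity: $f$ is linear, hence continuous, on $(-\infty,0]$, and it agrees with $g$ on $(0,\infty)$. The only point requiring care is the origin, which is the main (mild) obstacle. Here I will use the observation made before Proposition~\ref{hom}: by \eq{BT} and the squeeze theorem, $g(0+)=0=f(0)=f(0-)$. If $g$ is assumed continuous on $[0,\infty)$, this already forces $g(0)=0$ by \eq{BT}, so $f$ is continuous at $0$ and therefore on all of $\mathbb{R}$.
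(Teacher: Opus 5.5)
Your route is the one the paper intends. The corollary is stated as a direct consequence of Proposition~\ref{hom}, with the proof left to the reader, and your check of the hypotheses and of continuity at $0$ is fine. Two small points need correcting.

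First, Proposition~\ref{hom} requires $\lambda$-homogeneity for \emph{both} $\lambda=a$ and $\lambda=1-a$, not just for one admissible choice. Its sufficiency proof uses $(1-a)$-homogeneity in the case $x\geq 0$ and $a$-homogeneity in the case $x<0$. Under your ``one admissible choice'' reading the ``if'' direction would be false in general. For example, with $a=\tfrac13$, the paper's Theorem shows that a continuous, nonlinear, $\tfrac13$-homogeneous $g$ satisfying \eq{BT} does not give a solution. For $a\in\{0,1\}$ you therefore also need $0$-homogeneity, which just says $f(0)=0$. You already recorded this, so the argument closes once you invoke it.

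Second, in your direct verification for $a=1$ and $x<0$, both sides of \eq{kadd} equal $f(-x)+x$, not $f(x)$. The left side equals it because $0\leq f(-x)\leq -x$ gives $f(-x)+x\leq 0$. The equality you need still holds.
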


\begin{proof}
The proof is straightforward and is therefore left to the reader.
\end{proof}

The above corollary therefore shows that the Conjecture is false. From now on we deal with the strict condition $0<a<1$. Here we are going to distinguish two cases, depending on whether the ratio $\frac{\ln a}{\ln(1-a)}$ is rational or not. 

In the rational case we again obtain that the solution need not be linear over the positive half-line. The irrational case, however, forces linearity, and thus in this situation we obtain solutions of Jarczyk type (see \cite{Jar88}). 

We note that, in view of Baker's Theorem \cite[Theorem 1.4]{Bak90}, $\frac{\ln a}{\ln(1-a)}$ can be rational only if both of $\ln a$ and $\ln(1-a)$ are irrational.

\begin{prop}\label{g}
Let $\alpha,\beta\in\mathbb{R}\setminus\{-1,0,1\}$ with $\frac{\ln|\alpha|}{\ln|\beta|}\in\mathbb{Q}$. Then there exists a continuous function $g:\mathbb{R}\to\mathbb{R}$ satisfying the bow-tie condition \eq{BT}, which is $\lambda$-homogeneous for $\lambda\in\{\alpha,\beta\}$ and for which the ratio $\frac{g}{\mathrm{id}}$ is not constant over the negative and positive half-line.
\end{prop}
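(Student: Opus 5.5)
Write $\frac{\ln|\alpha|}{\ln|\beta|}=\frac{p}{q}$ with $p,q$ nonzero integers. Then $|\alpha|^q=|\beta|^p$. Set $\rho:=|\alpha|^{1/p}=|\beta|^{1/q}$. We have $\rho>0$ and $\rho\neq 1$, and both $|\alpha|$ and $|\beta|$ are integer powers of $\rho$. The plan is to look for $g$ of the form
\Eq{*}{
g(x)=x\,h(x),
}
with $h$ continuous on $\mathbb{R}\setminus\{0\}$. The condition $g(\lambda x)=\lambda g(x)$ becomes $h(\lambda x)=h(x)$ for $\lambda\in\{\alpha,\beta\}$. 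The bow-tie condition \eq{BT} becomes $0\le h\le 1$ on $\mathbb{R}\setminus\{0\}$. Given such an $h$, setting $g(0)=0$ makes $g$ continuous at $0$ by the squeeze argument already noted after \eq{BT}. So it suffices to build a continuous, nonconstant $h:\mathbb{R}\setminus\{0\}\to[0,1]$ that is invariant under multiplication by $\alpha$ and by $\beta$.

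First I handle the signs. Define $h(x)=\varphi(\ln|x|)$, where $\varphi:\mathbb{R}\to[0,1]$ is continuous and $\ln\rho$-periodic. Then $h$ depends only on $|x|$, so $h(\alpha x)=\varphi(\ln|x|+p\ln\rho)=\varphi(\ln|x|)=h(x)$, and likewise for $\beta$. Negative values of $\alpha$ or $\beta$ cause no trouble, because $h$ is even. A concrete choice is
\Eq{*}{
\varphi(t)=\tfrac12+\tfrac12\sin\Big(\frac{2\pi t}{\ln\rho}\Big).
}
This $\varphi$ is continuous, takes values in $[0,1]$, has period $\ln\rho$, and is nonconstant. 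Hence $g(x)=x\,\varphi(\ln|x|)$ for $x\neq0$, together with $g(0)=0$, has all the required properties.

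It remains to check each property. Continuity on $\mathbb{R}\setminus\{0\}$ is clear. At $0$ we have $|g(x)|\le|x|$, which gives continuity there. For \eq{BT}: if $x>0$ then $0\le g(x)\le x$, and if $x<0$ then $x\le g(x)\le 0$. The ratio $g/\mathrm{id}=h$ is nonconstant on each half-line, since $\varphi$ is nonconstant and $\ln|x|$ sweeps all of $\mathbb{R}$ on each half-line.

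There is no real obstacle in this construction. The only step needing care is the commensurability reduction: finding a common period $\rho$ such that both $\alpha$ and $\beta$ are $\pm$ integer powers of $\rho$. This is exactly where the rationality hypothesis is used. In the irrational case the two periods would generate a dense subgroup of $\mathbb{R}$, which forces $\varphi$ to be constant, consistent with the linearity the paper anticipates there. I also need $\rho\neq1$, and this holds because $|\alpha|\neq 1$.
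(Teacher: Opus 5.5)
Your proposal is correct and follows the paper's own construction: your $\ln\rho$ is the paper's common value $\gamma=\frac{\ln|\alpha|}{n}=\frac{\ln|\beta|}{m}$, and your $g(x)=x\,\varphi(\ln|x|)$ with a continuous $\gamma$-periodic $\varphi$ taking values in $[0,1]$ is exactly the paper's $g$. The only differences are cosmetic: you choose $\varphi(t)=\tfrac12+\tfrac12\sin(2\pi t/\ln\rho)$ where the paper uses $|\sin(\pi t/\gamma)|$, and you rightly need only that $\varphi$ be nonconstant, whereas the paper asks for a nowhere constant $h$.
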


\begin{proof}
If $\frac{\ln|\alpha|}{\ln|\beta|}$ is rational, then there exist $n,m\in\mathbb{Z}$ with $nm\neq 0$ such that $\frac{\ln|\alpha|}{n}=\frac{\ln|\beta|}{m}$ holds. Let us denote the common value by $\gamma$. Then let $h:\mathbb{R}\to\mathbb{R}$ be a nowhere constant, continuous function with $0\leq h(x)\leq 1$ for $x\in\mathbb{R}$ that is periodic with period $\gamma$, and define
\Eq{defg}{
g:\mathbb{R}\to\mathbb{R},\qquad
g(x):=
\begin{cases}
x\cdot h(\ln |x|)&\text{if }x\neq 0,\\
0&\text{if }x=0.
\end{cases}
}

Now we prove that $g$ satisfies the properties stated in our proposition. Obviously, $\frac{g}{\mathrm{id}}$ is not constant over the negative or the positive half-line. The continuity of $g$ on the positive and negative half‑line is immediate. Due to the chain of inequalities we have for $h$, it is easy to see that $g$ satisfies the bow-tie condition \eq{BT}. Thus, by the squeeze theorem, $g$ is continuous at $0$ as well. Finally, for $x\in\mathbb{R}$, we can write that
\Eq{*}{
g(\lambda x)
=\lambda x\cdot h(\ln|\lambda x|)
=\lambda x\cdot h(\ln|x|+\ln|\lambda|)
=\lambda x\cdot h(\ln|x|+k\gamma)=\lambda g(x),
}
where $k=n$ or $k=m$ if $\lambda=\alpha$ or $\lambda=\beta$, respectively.
\end{proof}

\begin{remark*}
The function $h$ appearing in the proof of Proposition \ref{g}. does clearly exist. Indeed, say the function
\Eq{*}{
h(x):=|\sin\big(\tfrac{\pi}{\gamma}\cdot x\big)|,\qquad x\in\mathbb{R},
}
possesses all the properties required in the proof.
\end{remark*}

\begin{coro}
There exists a solution $f:\mathbb{R}\to\mathbb{R}$ to the functional equation \eq{kadd} that satisfies \eq{BT} and \eq{neg} for some $0<a<1$, and that is not linear on the non‑negative half‑line, provided that $\frac{\ln a}{\ln(1-a)}\in\mathbb{Q}$.
\end{coro}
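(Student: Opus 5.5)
The plan is to combine Proposition \ref{g} with the sufficiency direction of Proposition \ref{hom}. Fix $0<a<1$ with $\frac{\ln a}{\ln(1-a)}\in\mathbb{Q}$, and set $\alpha:=a$ and $\beta:=1-a$. Both lie in $(0,1)$, so $\alpha,\beta\notin\{-1,0,1\}$. Moreover $\frac{\ln|\alpha|}{\ln|\beta|}=\frac{\ln a}{\ln(1-a)}$ is rational; note that $\ln(1-a)\neq 0$, so the quotient makes sense. Proposition \ref{g} therefore supplies a continuous $g:\mathbb{R}\to\mathbb{R}$ with the following properties: it satisfies \eq{BT}, it is both $a$-homogeneous and $(1-a)$-homogeneous, and $g/\mathrm{id}$ is not constant on the positive half-line. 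For definiteness I would take the explicit $g$ from \eq{defg}, built with the $h$ of the Remark, since that guarantees non-constancy of $g/\mathrm{id}$ on $(0,\infty)$ specifically.

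Next I would define $f(x):=ax$ for $x\leq 0$ and $f(x):=g(x)$ for $x>0$. Then \eq{neg} holds by construction. Since $g(0)=0$, $f$ agrees with $g$ on $[0,\infty)$, and hence $f$ is $\lambda$-homogeneous on the non-negative half-line, for instance with $\lambda=a$. The bow-tie condition \eq{BT} holds on $x\leq0$ because $0\leq a\leq 1$ gives $x\leq ax\leq 0$. It holds on $x>0$ because $g$ satisfies \eq{BT}, which there reads $0\leq g(x)\leq x$. In particular $0\leq f(x)\leq x$ for $x\geq 0$, so the hypotheses of Proposition \ref{hom} are met. Its ``if'' direction then yields that $f$ solves \eq{kadd}. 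Continuity of $f$, though not required, follows from the continuity of $g$ and from $f(0)=0$.

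Finally, $f$ is not linear on $[0,\infty)$. Linearity there would force $f(x)=cx$, i.e.\ $g(x)/x$ would be constant on $(0,\infty)$, contradicting the choice of $g$.

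The only point needing care is this last non-constancy on the positive side. The statement of Proposition \ref{g} is phrased somewhat ambiguously ("over the negative and positive half-line"). I would resolve it by appealing directly to the construction: with $h(t)=|\sin(\pi t/\gamma)|$, we get $g(x)/x=h(\ln x)$, which is nonconstant for $x>0$.
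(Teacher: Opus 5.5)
Your proof is correct and follows the paper's argument: Proposition~\ref{g} with $\alpha=a$, $\beta=1-a$ supplies $g$, the sufficiency half of Proposition~\ref{hom} applied to $f$ (equal to $ax$ for $x\leq 0$ and to $g$ for $x>0$) shows that $f$ solves \eq{kadd}, and your appeal to the explicit $h$ settles the non-constancy of $g/\mathrm{id}$ on $(0,\infty)$. One wording fix: that sufficiency argument needs homogeneity for \emph{both} $\lambda=a$ (used for $x<0$) and $\lambda=1-a$ (used for $x\geq 0$), so write ``both'' rather than ``for instance with $\lambda=a$''; your $g$ supplies both anyway.
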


\begin{proof}
If $0<a<1$ such that $\frac{\ln a}{\ln(1-a)}$ is rational, then, by Proposition \ref{g}., there exists a continuous function $g:\mathbb{R}\to\mathbb{R}$ which satisfies the bow-tie condition \eq{BT}, is homogeneous with $a$ and $1-a$ and for which the ratio $\frac{g}{\mathrm{id}}$ is not constant on the positive half-line. Thus, in view of Proposition \ref{hom}., the function
\Eq{*}{
f:\mathbb{R}\to\mathbb{R},\qquad
f(x):=
\begin{cases}
ax&\text{if }x\leq 0,\\
g(x)&\text{if }x>0
\end{cases}
}
is a desired solution of \eq{kadd}.
\end{proof}

Now we turn to the irrational case. As mentioned earlier, this condition will force the appearance of Jarczyk‑type solutions. We also note that the continuity of the function plays a central role in the proof below.

\begin{thm}
Assume that $f:\mathbb{R}\to\mathbb{R}$ is continuous and it satisfies the bow-tie condition \eq{BT} and \eq{neg} for some $0<a<1$ with $\frac{\ln a}{\ln(1-a)}\in\mathbb{R}\setminus\mathbb{Q}$. Then the function $f$ solves the functional equation \eq{kadd} if and only if there exists $0\leq b\leq 1$ such that
\Eq{+}{f(x)=bx,\qquad x\geq 0.}
\end{thm}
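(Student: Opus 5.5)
The plan is to reduce the statement to Proposition \ref{hom} and then use a density argument based on the irrationality of $\frac{\ln a}{\ln(1-a)}$. The bow-tie condition \eq{BT} gives $0\leq f(x)\leq x$ for all $x\geq 0$. Together with \eq{neg}, this places us exactly within the hypotheses of Proposition \ref{hom}.

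\emph{Sufficiency.} If $f(x)=bx$ for $x\geq 0$, then $f$ is trivially $\lambda$-homogeneous on the non-negative half-line for every $\lambda\geq 0$, in particular for $\lambda=a$ and $\lambda=1-a$. Proposition \ref{hom} then shows that $f$ solves \eq{kadd}.

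\emph{Necessity.} Suppose $f$ solves \eq{kadd}. The proof of Proposition \ref{hom} actually yields both identities
\Eq{*}{
f(ax)=af(x)\quad\text{and}\quad f\big((1-a)x\big)=(1-a)f(x),\qquad x\geq 0 .
}
This is because the computation for $u\leq 0$ and the computation for $x\geq 0$ are both valid for a solution. Define $\varphi(t):=e^{-t}f(e^{t})$ for $t\in\mathbb{R}$. Then $\varphi$ is continuous and takes values in $[0,1]$. Writing $p:=\ln a<0$ and $q:=\ln(1-a)<0$, the two identities become $\varphi(t+p)=\varphi(t)$ and $\varphi(t+q)=\varphi(t)$ for all $t$. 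Iterating them, and also running them backwards by substituting $t-p$ for $t$, we get $\varphi(t+mp+nq)=\varphi(t)$ for all $m,n\in\mathbb{Z}$.

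The key step, and the main point needing care, is the classical fact that the subgroup $p\mathbb{Z}+q\mathbb{Z}$ is dense in $\mathbb{R}$ when $p/q\notin\mathbb{Q}$. This follows from Kronecker's theorem: a nontrivial subgroup of $\mathbb{R}$ is either cyclic or dense, and it cannot be cyclic because $p$ and $q$ are rationally independent. The hypothesis on $\frac{\ln a}{\ln(1-a)}$ is exactly this irrationality.

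Hence $\varphi$ is a continuous function whose periods form a dense set. For any $t$, choose periods $\tau_k\to t$; then $\varphi(t)=\varphi(0+\tau_k)=\varphi(0)$. So $\varphi\equiv b:=f(1)$, and $b\in[0,1]$ by \eq{BT}. Therefore $f(x)=bx$ for $x>0$, and the case $x=0$ follows from $f(0)=0$. Continuity is used only in this last step, which is consistent with the remark that continuity plays the central role.
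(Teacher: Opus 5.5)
Your proposal is correct and follows the paper's proof essentially step by step. Sufficiency comes from Proposition~\ref{hom}, and necessity comes from the periodicity of $\varphi(t)=e^{-t}f(e^t)$ with periods $\ln a$ and $\ln(1-a)$, the density of $\ln a\,\mathbb{Z}+\ln(1-a)\,\mathbb{Z}$ under the irrationality hypothesis, and continuity forcing $\varphi$ to be constant; the only cosmetic difference is that you get density from the cyclic-or-dense dichotomy for subgroups of $\mathbb{R}$ rather than from the paper's explicit Kronecker approximation, which is equally valid.
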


\begin{proof}
If \eq{+} is valid, then $f$ is homogeneous on the non-negative half-line. Thus, in the light of Proposition \ref{hom}., the sufficiency is trivial. So we can now move on to necessity.

Assume that $f$ solves equation \eq{kadd}. Then, by Proposition \ref{hom}., $f$ is $\lambda$-homogeneous on the non-negative half-line with $\lambda\in\{a,1-a\}$. Then, for $t\in\mathbb{R}$, define the function $\varphi(t):=e^{-t}f(e^t)$ and compute
\Eq{*}{
\varphi(t+\ln\lambda)
=e^{-(t+\ln\lambda)}f(e^{t+\ln\lambda})
=\frac{1}{\lambda}\cdot e^{-t}f(\lambda e^t)=\varphi(t),\qquad t\in\mathbb{R}.
}
Then, by induction, it is easy to see that $\varphi$ is periodic with any member of the set
\Eq{*}{
P:=\{n\ln a+m\ln(1-a)\mid n,m\in\mathbb{Z}\}.
}

Let $u\in\mathbb{R}$ and $\varepsilon>0$ be arbitrary but fixed and define
\Eq{*}{
x_0:=-\frac{\ln a}{\ln(1-a)}
\qquad\text{and}\qquad
y_0:=-\frac{u}{\ln(1-a)}.
}
Then, by our assumption, $x_0\in\mathbb{R}\setminus\mathbb{Q}$, thus, by Kronecker's Theorem, there exist integers $n_0,m_0\in\mathbb{Z}$ with $m_0>0$ such that
\Eq{*}{
|n_0x_0-m_0-y_0|<-\frac{\varepsilon}{\ln(1-a)}.
}
Multiplying both sides of the above inequality by $-\ln(1-a)$, we obtain that
\Eq{*}{
|n_0\ln a+m_0\ln(1-a)-u|<\varepsilon.
}
Consequently $P$ is everywhere dense in $\mathbb{R}$. By its continuity, it follows that $\varphi$ must be constant on its domain, that is, there exists $b\in\mathbb{R}$ such that, for all $t\in\mathbb{R}$, we have $f(e^t)=be^t$. Putting $x=e^t$ and taking into account the continuity of $f$, it follows that $f(x)=bx$ if $x\geq0$. By condition \eq{BT}, it also follows that $0\leq b\leq 1$.
\end{proof}


\begin{thebibliography}{30}

\bibitem{Bak90}
Baker, A., \emph{Transcendental Number Theory}, Cambridge University Press (1975).

\bibitem{For83}
Forti, G. L., \emph{On some conditional Cauchy equations on thin sets}, Boll. Un. Mat. Ital. B(6)2, 391-402 (1983).

\bibitem{For84}
Forti, G. L., \emph{Redundancy conditions for the functional equation {$f(x+h(x))=f(x)+f(h(x))$}}, Z. Anal. Anwendungen, Zeitschrift für Analysis und ihre Anwendungen 3, 549-554 (1984).

\bibitem{GlaMat25}
Głazowska, D., Matkowski, J., \emph{Weakly associative functions}, Aequat. Math. 99, 1827-1841 (2025).

\bibitem{Jar88}
Jarczyk, W., \emph{On continuous functions which are additive on their graphs}, Berichte Math.-Statist. Section in der Forschungsgesellschaft Joanneum - Graz 292 (1988).

\bibitem{Jar91}
Jarczyk, W., \emph{A recurrent method of solving iterative functional equations}, Prace Matematyczne Uniw. Si. w Katowicach 1206 (1991).

\bibitem{Mat85}
Matkowski, J., \emph{On the functional equation $\varphi(x+\varphi(x))=\varphi(x)+\varphi(\varphi(x))$}, Proceedings of the Twenty-third International Symposium on Functional Equations, Gargnano, Italy, June 2 - June 11, 1985, Centre for Information Theory, Faculty of Mathematics, University of Waterloo, Waterloo, Ontario, 24-25.

\bibitem{Mat93}
Matkowski, J., \emph{Functions which are additive on their graphs and some generalizations}, Rocznik Naukowo-Dydaktyczny., Z. 159, Prace Matematyczne 13, 233-240 (1993).

\bibitem{Mat25}
Matkowski, J., \emph{Weakly associative functions and means - new examples and open questions}, Aequat. Math. 99, 2581-2597 (2025).

\bibitem{Sab85}
Sablik, M., \emph{Note on a Cauchy conditional equation}, Rad. Mat. 1, no. 2, 241-245 (1985).

\bibitem{Zdu72}
Zdun, M., \emph{On the uniqueness of solutions of the functional equation $\phi(x+f(x))=\phi(x)+\phi(f(x))$}, Aeq. Math. 8, 229-232 (1972).

\end{thebibliography}
\end{document}